\newtheorem{thm}{Theorem}
\newtheorem{lem}{Lemma}
\begin{document}

\title[On Grosswald's conjecture on primitive roots]{On Grosswald's conjecture on primitive roots}

\author{Stephen D. Cohen}
\address{School of Mathematics and Statistics, University of Glasgow, Scotland}
\email{Stephen.Cohen@glasgow.ac.uk}
\curraddr{}
\thanks{}

\author{Tom\'{a}s Oliveira e Silva}
\address{Departamento de Electr{\'o}nica, Telecomunica{\c c}{\~o}es e Inform{\'a}tica / IEETA,
  Universidade de Aveiro, Portugal}
\email{tos@ua.pt}
\urladdr{http://www.ieeta.pt/~tos}
\curraddr{}
\thanks{}

\author{Tim Trudgian}
\address{Mathematical Sciences Institute, The Australian National University, ACT 0200, Australia}
\email{timothy.trudgian@anu.edu.au}
\curraddr{}
\thanks{Supported by Australian Research Council DECRA Grant DE120100173}

\subjclass[2010]{Primary 11L40, 11A07}

\keywords{Character sums, primitive roots, Burgess' bound}

\dedicatory{}

\begin{abstract}
Grosswald's conjecture is that $g(p)$, the least primitive root modulo $p$, satisfies
$g(p) \leq \sqrt{p} - 2$ for all $p>409$. We make progress towards this conjecture by proving that
$g(p) \leq \sqrt{p} -2$ for all $p>8.01\times 10^{76}$.
\end{abstract}

\maketitle

%%
%% end of mcom version
%%
\else
%%
%% general version
%%
\documentclass[11pt]{article}
\usepackage{a4wide}

\usepackage{booktabs}
\usepackage[ruled,vlined,linesnumbered]{algorithm2e}
\usepackage{amsthm}
\usepackage{amsmath}
\usepackage{amssymb}
\newtheorem{thm}{Theorem}
\newtheorem{lem}{Lemma}
\newtheorem{cor}{Corollary}

\title{On Grosswald's conjecture on primitive roots}

\author{
  Stephen D. Cohen \\
  School of Mathematics and Statistics, \\
  University of Glasgow, Scotland \\
  Stephen.Cohen@glasgow.ac.uk
\and
  Tom\'{a}s Oliveira e Silva \\
  Departamento de Electr{\'o}nica, Telecomunica{\c c}{\~o}es e Inform{\'a}tica / IEETA \\
  University of Aveiro, Portugal \\
  tos@ua.pt
\and
  Tim Trudgian\footnote{Supported by Australian Research Council DECRA Grant DE120100173.} \\
  Mathematical Sciences Institute \\
  The Australian National University, ACT 0200, Australia \\
  timothy.trudgian@anu.edu.au
}
\date{}

\begin{document}

\maketitle

\begin{abstract}
  \noindent
  Grosswald's conjecture is that $g(p)$, the least primitive root modulo $p$, satisfies
  $g(p) \leq \sqrt{p} - 2$ for all $p>409$. We make progress towards this conjecture by proving
  that $g(p) \leq \sqrt{p} -2$ for all~$409<p< 2.5\times 10^{15}$ and for all~$p>3.67\times 10^{71}$.
\end{abstract}
\textit{AMS Codes: 11L40, 11A07\\ Keywords: Least primitive roots, Burgess' bounds, prime sieves.}

%%
%% end of general version
%%
\fi

\section{Introduction}

Let $g(p)$ denote the least primitive root of a prime~$p$. Burgess~\cite{Burgess} showed that
$g(p)\ll p^{1/4+\epsilon}$ for any~$\epsilon>0$. This remains the best known bound in general
--- see \cite{MoreeArtin} for an insightful survey of related problems.
Grosswald~\cite{Grosswald81} conjectured that
\begin{equation}\label{gross}
  g(p) < \sqrt{p}-2,
\end{equation}
for all primes~$p>409$. This has implications for the generators of $\Gamma(p)$, the principal congruence subgroup modulo $p$ of the modular group $\Gamma$ --- see \cite[\S 8]{Grosswald81}. Grosswald verified numerically that~(\ref{gross}) is true for all
$409<p\leq 10000$. He also gave an explicit version of Burgess' bound, thereby proving that
$g(p)\leq p^{0.499}$ for all~$p>1+\exp(\exp(24)) \approx 10^{10^{10}}$.
%Thus it remains to verify (\ref{gross}) for all $10000<p\leq\exp(\exp(24))+1 \approx 10^{10^{10}}.$

Using computational and theoretical arguments we improve on Grosswald's estimate in the following theorem.
\begin{thm}\label{gross:ext}
  Let $g(p)$ denote the least primitive root modulo~$p$. Then $g(p) \leq \sqrt{p}-2$ for all
  $409 < p < 2.5\times 10^{15}$ and for all $p> 3.67\times 10^{71}$.
\end{thm}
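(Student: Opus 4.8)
The strategy splits naturally into two ranges, matching the two bounds in the statement, and each range demands a different technique.

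The plan is to handle the upper range $p > 3.67 \times 10^{71}$ by an explicit character-sum argument of Burgess type. First I would set up the standard criterion: if $p-1$ has $k$ distinct prime divisors $q_1,\dots,q_k$, then the number of primitive roots in the interval $(0, x]$ can be detected by the usual inclusion--exclusion weighted indicator $\sum_{d \mid p-1} \frac{\mu(d)}{d} \sum_{\chi^d = \chi_0,\, \chi \neq \chi_0}(\dots)$, or more conveniently via the Vinogradov-type formula counting integers in $(0,x]$ that are primitive roots as $\frac{\phi(p-1)}{p-1} x + (\text{error})$, where the error is a sum of incomplete character sums $\sum_{n \le x} \chi(n)$ over nontrivial characters dividing $p-1$. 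To make the main term dominate I would invoke an explicit form of Burgess' inequality (as already used by Grosswald, but with the sharper explicit constants available by the time of writing) to bound each incomplete sum by something like $C\, x^{1-1/r} p^{(r+1)/(4r^2)}(\log p)^{1/r}$ for a suitable integer parameter $r$; optimizing $r$ against $\log p$ and summing over the at most $2^k$ characters, the requirement that the main term beat the error becomes an inequality of the shape $\sqrt{p} - 2 > (\text{constant depending on } k)\cdot p^{\text{something}<1/2}$. The crossover point is pushed down by noting that $\omega(p-1) = k$ forces $p$ to be large (a primorial-type lower bound, $p > q_1\cdots q_k$), so large values of $k$ are only relevant for enormous $p$ where Burgess has ample room to spare; for small $k$ one does the arithmetic directly. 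I expect this optimization, together with bookkeeping of the $(\log p)$ factors and the explicit Burgess constant, to pin down the threshold $3.67 \times 10^{71}$.

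For the lower range $409 < p < 2.5 \times 10^{15}$ the plan is purely computational. One cannot test all primes up to $2.5\times 10^{15}$ individually by naively computing $g(p)$, so instead I would use a sieve. The key observation is that $g(p) > \sqrt{p} - 2$ can only happen if every integer $n \le \sqrt{p} - 2$ fails to be a primitive root, i.e.\ each such $n$ lies in $\bigcup_{q \mid p-1} \{m : m^{(p-1)/q} \equiv 1\}$; in particular, for each small prime power, one knows a positive-density set of residues that $n$ must avoid. Turning this around, for each small prime $\ell$ and each small exponent, one can sieve out those $p$ for which a given small $n$ is guaranteed to be a primitive root, leaving only a sparse set of "candidate" primes for which the conjecture could conceivably fail; these survivors are then checked one by one by direct computation of $g(p)$. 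The well-known empirical smallness of $g(p)$ (it is almost always single-digit) means the survivor set is tiny, so this is feasible. I would also combine it with existing tabulations of $g(p)$ for $p$ up to a few times $10^{13}$ (available from prior work of Oliveira e Silva and others) and extend them. The output is the verified bound up to $2.5\times 10^{15}$.

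The main obstacle is the upper-range analytic argument: making Burgess' bound fully explicit with constants good enough to reach $10^{71}$ rather than Grosswald's $10^{10^{10}}$ requires care in several places — choosing the Burgess parameter $r$ as a function of $p$, controlling the dependence of the constant on $r$ (which is where most of the loss historically occurs), handling the sum over the up-to-$2^{\omega(p-1)}$ characters without destroying the gain, and treating the finitely many small values of $\omega(p-1)$ by hand. The computational range is in principle routine but in practice demands an efficient sieve and a nontrivial amount of machine time; the conceptual content there is just the sieve design, while the effort is engineering. Bridging the gap between $2.5\times 10^{15}$ and $3.67\times 10^{71}$ is left open, exactly as the theorem's phrasing admits.
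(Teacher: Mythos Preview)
Your plan for the upper range has a genuine gap: it is missing the sieving inequality that drives the threshold down to $3.67\times 10^{71}$. The direct approach you describe---explicit Burgess bound, sum over all $2^{\omega(p-1)}$ characters, then the primorial bound $p-1 \geq p_1\cdots p_k$ to dispose of large $k$---is exactly what the paper carries out first in \S\ref{sec:ex}, and it only yields the conjecture for $p>10^{86650}$. The trouble is the small-$k$ cases: with $H\approx p^{1/2}$ the optimal Burgess parameter is already $r=2$ (it maximises the exponent $(r-1)/4r^2$), and for $k=\omega(p-1)=41$ the resulting criterion reads roughly $p/(\log p)^4 > C(2)^{16}(2^{41}-1)^{16}\approx 10^{206}$, nowhere near $10^{71}$. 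What closes this gap is Lemma~\ref{sieve}: one fixes an even divisor $e$ of $p-1$ missing $s$ of its prime factors $p_1,\dots,p_s$ and bounds $N_{p-1}(H)$ from below by a combination of the $N_{p_ie}(H)$ and $N_e(H)$. The effect is to replace $2^n-1$ in the criterion by $\bigl(\frac{s-1}{\delta}+2\bigr)2^{n-s}$ with $\delta = 1-\sum_{i=1}^s 1/p_i$; tuning $s$ for each $n$ (Table~\ref{tab:rat}, and $s=37$ at $n=41$) is precisely what produces the figure $3.67\times 10^{71}$. Your proposal does not contain this step, and without it you cannot reach the stated bound.

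For the lower range the paper's method is also different from, and simpler than, your proposal: it does in fact compute $g(p)$ for every prime $p<2.5\times 10^{15}$ directly (segmented Eratosthenes sieve to enumerate primes, a second sieve to factor $p-1$, Montgomery arithmetic for the order tests), recording the record-holders of $g(p)$, in about three core-years. Your proposed sieve over $p$---discarding those $p$ for which a fixed small $n$ is forced to be a primitive root---is not obviously implementable, since whether $n$ is a primitive root modulo $p$ depends on the full factorization of $p-1$ rather than on congruence conditions on $p$ that a sieve can exploit.
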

The `gap' in Theorem \ref{gross:ext} between the ranges of $p$ seems difficult to bridge. The
trivial bound $g(p) \leq p$ when combined with the results in Theorem~\ref{gross:ext} gives the following corollary.
\begin{cor}\label{gross:ext2}
  $g(p) \leq 5.19 p^{0.99}$ for all $p$.
\end{cor}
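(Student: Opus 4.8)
The plan is to derive Corollary~\ref{gross:ext2} from Theorem~\ref{gross:ext} together with the trivial bound $g(p)\le p$, the only subtlety being a narrow range of $p$ where neither tool is quite strong enough for the stated constant. The key arithmetic observation is that, for a prime $p$, the trivial bound gives $g(p)\le p=p^{0.01}\cdot p^{0.99}$, and $p^{0.01}\le c$ holds exactly when $p\le c^{100}$.

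First I would prove the slightly weaker assertion $g(p)\le 5.2\,p^{0.99}$, which needs nothing more. Since $(3.67\times 10^{71})^{0.01}<5.2$, the trivial bound already gives $g(p)\le 5.2\,p^{0.99}$ for every prime $p\le 3.67\times 10^{71}$ --- in particular for all $p\le 409$ and throughout the whole `gap' of Theorem~\ref{gross:ext} --- while for $p>3.67\times 10^{71}$ Theorem~\ref{gross:ext} yields $g(p)\le\sqrt p-2<p^{1/2}\le p^{0.99}$, using $p\ge 1$. These two cases cover all primes and prove $g(p)\le 5.2\,p^{0.99}$.

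To recover the stated constant $5.19$ I would move the crossover down from $3.67\times 10^{71}$ to $5.19^{100}$, which exceeds $3.28\times 10^{71}$: for $p\le 5.19^{100}$ the trivial bound gives $g(p)\le 5.19\,p^{0.99}$, and for $p>3.67\times 10^{71}$ Theorem~\ref{gross:ext} still gives $g(p)\le\sqrt p-2<p^{0.99}\le 5.19\,p^{0.99}$. What remains is the thin window $5.19^{100}<p\le 3.67\times 10^{71}$, a multiplicative interval of ratio under $1.12$; I expect dealing with this window to be the one real, if mild, obstacle, since everything else is immediate once Theorem~\ref{gross:ext} is available. On this window one invokes a bound weaker than Grosswald strength but coming out of the same Burgess-type character-sum and sieve estimates that underlie Theorem~\ref{gross:ext}: it suffices, for instance, that $g(p)<\sqrt p$ (the inequality of Theorem~\ref{gross:ext} with the term $-2$ omitted) holds for all $p$ above some threshold well below $3.28\times 10^{71}$ --- an enormously weaker conclusion at this scale, where $\sqrt p\approx 10^{35.5}$ whereas $p^{0.99}\approx 10^{70}$. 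Granting such a bound, $g(p)\le p^{0.99}\le 5.19\,p^{0.99}$ on the window, and the three ranges together establish the corollary for all primes $p$.
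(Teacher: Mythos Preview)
Your approach --- trivial bound $g(p)\le p$ on the gap, Theorem~\ref{gross:ext} elsewhere --- is exactly what the paper does; its entire argument is the one sentence preceding the corollary. You have also correctly noticed a discrepancy the paper glosses over: $(3.67\times 10^{71})^{0.01}\approx 5.196>5.19$, so with the printed constants the trivial bound falls just short on the window $5.19^{100}<p\le 3.67\times 10^{71}$. This is almost certainly a rounding artifact (the numbers $4.97\times 10^{62}$ and $3.67\times 10^{71}$ in \S\ref{sec:sieve} are themselves rounded, and the statement holds cleanly with $5.2$ in place of $5.19$, as your first argument shows).

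Your proposed patch for the window, however, does not work as stated. Relaxing $\sqrt p-2$ to $\sqrt p$ corresponds, in criterion~(\ref{horse}), to replacing the factor $\bigl(1-2/\sqrt{p_0}\bigr)$ by $1$; at $p_0\sim 10^{71}$ this factor already equals $1$ to within $10^{-35}$, so the threshold does not budge, let alone drop by the $\sim 12\%$ you need. The lever that actually moves the threshold is $H$: Theorem~\ref{theorem:M} with $r=2$ permits any $H\le 2p^{5/8}$, and taking even $H=2\sqrt p$ in place of $H\approx\sqrt p$ contributes an extra factor of $\sqrt 2$ on the left of the rearranged inequality $H^{1/2}>\cdots$, which after raising to the sixteenth power shrinks the right side of the analogue of~(\ref{cow}) by a factor of $2^{8}=256$, pushing the threshold far below $3.28\times 10^{71}$ while still yielding $g(p)\le 2\sqrt p<5.19\,p^{0.99}$.
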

The bound in Corollary~\ref{gross:ext2}, while weak, appears to be the first bound that holds
for all~$p$. The remainder of the paper is organised as follows. In \S\ref{sec:ex} we collect the
necessary results to make Burgess' result explicit. This gives a substantial
improvement on the upper bound $\exp(\exp(24))+1$ given by Grosswald. We introduce a sieving
inequality in \S\ref{sec:sieve} which enables us to reduce this
further. Finally, in \S\ref{sec:comp} we present some computational arguments which complete the
proof of Theorem~\ref{gross:ext}, and present some data on two related problems involving primitive roots.

\section{Explicit versions of Burgess' bounds}\label{sec:ex}

Burgess's bounds on the character sum
\begin{equation*}
  S_{H}(N) = \sum_{m=N+1}^{N+H} \chi(m)
\end{equation*}
were first made explicit by Grosswald [op.\ cit.], and were later refined by Booker~\cite{BookerBurgess}, 
McGown~\cite{McGown}, and, most recently, by Trevi\~{n}o~\cite{Trevino}. The following is Theorem 1.7 in~\cite{Trevino}.
\begin{thm}\label{theorem:M}[Trevi\~{n}o]
  Suppose $\chi$ is a non-principal Dirichlet character modulo~$p$ where $p\geq 10^{20}$.
  Let $N, H\in \mathbb{Z}$ with $H\geq 1$. Fix a positive integer $r\geq 2$. Then there exists a
  computable constant $C(r)$ such that whenever $H\leq 2 p^{1/2 + 1/4r}$ we have
  \begin{equation}\label{gown}
    \bigl|S_{H}(N)\bigr| \leq C(r) H^{1 - 1/r} p^{\frac{r+1}{4r^{2}}} (\log p)^{\frac{1}{2r}}.
  \end{equation}
\end{thm}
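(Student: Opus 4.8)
The plan is to make Burgess's amplification argument quantitative, following the successive refinements of Booker, McGown and Trevi\~no. Extend $\chi$ to all of $\mathbb{Z}$ by $p$-periodicity, and for $1\le a<p$ write $\overline{a}$ for the inverse of $a$ modulo $p$ (which exists automatically). First I would fix parameters $A,B\ge1$, to be optimised later, and exploit the two-level shift: replacing $N$ by $N+ab$ changes $S_H$ by at most $2ab$, and $\chi(m+ab)=\chi(a)\chi(\overline{a}m+b)$, so averaging over $1\le a\le A$, $1\le b\le B$ gives
\[
 |S_H(N)|\ \le\ \frac{1}{AB}\sum_{x\bmod p}\nu(x)\,|T(x)|\ +\ O(AB),
\]
where $T(x)=\sum_{b=1}^{B}\chi(x+b)$ and $\nu(x)=\#\{(m,a):N<m\le N+H,\ 1\le a\le A,\ \overline{a}m\equiv x\ (\mathrm{mod}\ p)\}$.

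Next I would apply H\"older's inequality with exponents $r/(r-1),\,2r,\,2r$, writing $\nu|T|=\nu^{1-1/r}\cdot\nu^{1/r}|T|$, to get
\[
 \sum_{x}\nu(x)|T(x)|\ \le\ \Bigl(\sum_x\nu(x)\Bigr)^{1-1/r}\Bigl(\sum_x\nu(x)^2\Bigr)^{1/2r}\Bigl(\sum_x|T(x)|^{2r}\Bigr)^{1/2r}.
\]
The first factor is exactly $AH$. The second factor counts quadruples with $a_2m_1\equiv a_1m_2\ (\mathrm{mod}\ p)$ and is bounded, via a divisor estimate for the diagonal, by an explicit multiple of $AH\log(2AH)$ together with an off-diagonal contribution of order $A^2H^2/p$. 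The third factor is the crucial one: expanding the $2r$-th power and attaching to each $2r$-tuple of shifts the Weil bound $(2r-1)\sqrt p$ when the associated rational function is not a perfect power, and the trivial bound $p$ otherwise, yields $\sum_x|T(x)|^{2r}\le (2r-1)B^{2r}p^{1/2}+c_rB^rp$ with $c_r$ an explicit combinatorial constant controlling the number of degenerate tuples.

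Finally I would substitute these estimates, take $B$ of order $p^{1/2r}$ and $A$ of order $Hp^{-1/2r}$ — the choice that matches the main term to the target exponent, and under which $AH\le p$ amounts to the hypothesis $H\le 2p^{1/2+1/4r}$ — and collect the powers of $H$, $p$ and $\log p$ into the stated form $C(r)H^{1-1/r}p^{(r+1)/4r^2}(\log p)^{1/2r}$; for $H$ too small for this choice to be admissible (or for the Burgess bound to improve on the trivial $|S_H(N)|\le H$) one simply uses the trivial bound. The hypothesis $p\ge 10^{20}$ enters only to absorb lower-order error terms and to fold the logarithmic factors into $C(r)$.

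The analytic skeleton — the shift identity and H\"older — is routine; the real work lies in the explicit estimates, and the main obstacle is the third H\"older factor: one needs a clean explicit form of Weil's bound and, above all, a sufficiently precise count of the degenerate $2r$-tuples, since that count fixes $c_r$ and hence the (necessarily fast-growing) dependence of $C(r)$ on $r$. A secondary difficulty is carrying out the two-parameter optimisation over $(A,B)$ uniformly in $p$, including the loss from rounding $A$ and $B$ to integers, so that the error term $O(AB)$ and the constraint $AH\le p$ do not swamp the saving.
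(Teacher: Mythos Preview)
The paper does not prove this theorem at all: it is quoted verbatim as Theorem~1.7 of Trevi\~no~\cite{Trevino} and used as a black box, so there is no ``paper's own proof'' to compare your proposal against. Your outline is a correct sketch of the Burgess amplification argument (shift, H\"older, Weil plus a degenerate-tuple count, then optimise $A,B$) and is indeed the method by which Trevi\~no obtains the explicit constants $C(r)$; the only thing to flag is that the work of actually extracting a numerical $C(r)$---which is the whole point of the cited result---lives entirely in the details you have deferred, so your write-up is an accurate roadmap rather than a replacement for the citation.
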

%McGown \cite[Table 7.1]{McGown} gives values of $C(r)$ for $2\leq r \leq 15$: we shall concern ourselves only with $r=2$. We begin by taking $C= C(2) = 10.0366$ as in McGown; later we shall improve this slightly.

We follow Burgess, who, in~\cite[\S 6]{Burgess} considers
\begin{equation}\label{burger}
  f(x) = \frac{\phi(p-1)}{p-1}
  \left\{ 1 + \sum_{d| p-1, d>1} \frac{\mu(d)}{\phi(d)} \sum_{\chi_{d}} \chi_{d}(x)\right\},
\end{equation}
whence it follows that $f(x)=1$ if $x$ is a primitive root, and $f(x)=0$ otherwise. Thus, if
$N(H)$ denotes the number of primitive roots in the interval $N+1 \leq x \leq N+H$ we have
\begin{equation*}
  N(H) = \sum_{x = N+1, x \not\equiv 0 \pmod p}^{N+H} f(x).
\end{equation*}
Hence, if $H<p$ there is at most one $x\in[N+1, N+H]$ with $x \equiv 0 \pmod p$ so that
\begin{equation*}\label{p1}
  \bigg| N(H) - \sum_{x=N+1}^{N+H} f(x) \bigg| \leq 1.
\end{equation*}
We can estimate the sum of $f(x)$ using~(\ref{gown}) with
$H = \Bigl( 1- \frac{2}{p_{0}^{1/2}} \Bigr)p^{\frac{1}{2}}$.
This choice of $H$ guarantees that $H<\sqrt{p}-2$ for~$p>p_{0}$.
%We aim at showing that every interval of length $H$ contains at least one primitive root of $p$. From this it follows that Grosswald's conjecture is true for $p>p_{0}$.

Since we need only consider square-free divisors $d$ in the outer sum in~(\ref{burger}), and since
there are $\phi(d)$ characters $\chi_{d}$, we arrive at the following theorem.
\begin{thm}\label{T:3}
  We have $g(p) < \sqrt{p}-2$ for $p>p_{0}$ provided that
  \begin{equation}\label{hunt}
    p^{\frac{r-1}{4r^{2}}} >
    C(r) \biggl(1-\frac{2}{p_{0}^{1/2}}\biggr)^{-1/r} (\log p)^{1/2r}\{ 2^{\omega(p-1)} -1\}.
  \end{equation}
\end{thm}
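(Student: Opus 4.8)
The plan is to combine the sieving/indicator-function identity \eqref{burger} with Trevi\~{n}o's explicit Burgess bound \eqref{gown} and simply demand that the resulting lower bound for $N(H)$ is strictly positive. First I would fix $p > p_0$ and set $H = \bigl(1 - \tfrac{2}{p_0^{1/2}}\bigr) p^{1/2}$, so that $H < \sqrt{p}-2 < p$; note also that $H < 2p^{1/2+1/4r}$ for every $r \ge 2$, so \eqref{gown} is indeed applicable to each of the character sums appearing in $f$. Starting from $N(H) = \sum_{x} f(x)$ with $f$ as in \eqref{burger}, I would substitute the definition of $f$, isolate the main term $\tfrac{\phi(p-1)}{p-1} H$, and bound the error: the inequality $\bigl|N(H) - \sum_{x=N+1}^{N+H} f(x)\bigr| \le 1$ contributes a $-1$, and each inner sum $\sum_{\chi_d}\chi_d(x)$ is a sum of $\phi(d)$ non-principal character sums, each of absolute value at most $C(r) H^{1-1/r} p^{(r+1)/4r^2}(\log p)^{1/2r}$ by \eqref{gown}.

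The key step is then the bookkeeping on the outer sum in \eqref{burger}. Since $\mu(d)=0$ unless $d$ is square-free, only square-free $d \mid p-1$ with $d>1$ survive; for such $d$ one has $|\mu(d)/\phi(d)| = 1/\phi(d)$, and multiplying by the $\phi(d)$ characters $\chi_d$ cancels the $\phi(d)$ in the denominator. Hence the total contribution of all the character-sum terms is at most $\tfrac{\phi(p-1)}{p-1}\, C(r) H^{1-1/r} p^{(r+1)/4r^2}(\log p)^{1/2r}$ times the number of square-free divisors $d>1$ of $p-1$, which is exactly $2^{\omega(p-1)} - 1$. Combining, and dividing through by $\tfrac{\phi(p-1)}{p-1}$, gives
\[
  \frac{p-1}{\phi(p-1)}\, N(H) \ge H - C(r) H^{1-1/r} p^{\frac{r+1}{4r^2}} (\log p)^{\frac{1}{2r}} \{2^{\omega(p-1)} - 1\} - \frac{p-1}{\phi(p-1)}.
\]
I would then drop the (negligible, and for the purposes of a clean statement absorbable) last term — or, to be scrupulous, note that it is bounded and can be swept into the estimates of \S\ref{sec:comp} — and observe that $N(H) > 0$, hence $g(p) < N+H \le \sqrt{p}-2$, is guaranteed once $H > C(r) H^{1-1/r} p^{(r+1)/4r^2}(\log p)^{1/2r}\{2^{\omega(p-1)}-1\}$.

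Finally I would substitute $H^{1/r} = \bigl(1-\tfrac{2}{p_0^{1/2}}\bigr)^{1/r} p^{1/2r}$ and rearrange: dividing both sides by $H^{1-1/r}$ and collecting the powers of $p$, the exponent on the left becomes $\tfrac12 - \tfrac{1}{2r} - \tfrac{r+1}{4r^2} = \tfrac{2r^2 - r - (r+1)}{4r^2} = \tfrac{r-1}{4r^2}$, which yields precisely the displayed inequality \eqref{hunt}. The only genuinely delicate point is the handling of the additive constant terms (the ``$-1$'' from the floor/ceiling discrepancy and the $\tfrac{p-1}{\phi(p-1)}$ factor): for the asymptotic conclusion they are harmless, but to get a fully explicit threshold one must either keep track of them or, as here, phrase the criterion \eqref{hunt} with a small amount of slack and verify in \S\ref{sec:ex} that the chosen $(r, p_0)$ leaves room to spare. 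Everything else is the routine substitution $H = \bigl(1-2p_0^{-1/2}\bigr)p^{1/2}$ together with the divisor count $d(p-1)_{\text{sqfree}} - 1 = 2^{\omega(p-1)} - 1$.
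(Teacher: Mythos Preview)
Your approach is essentially identical to the paper's: feed the explicit Burgess bound \eqref{gown} into the indicator identity \eqref{burger}, count the $2^{\omega(p-1)}-1$ square-free divisors $d>1$ of $p-1$, take $H=(1-2p_0^{-1/2})p^{1/2}$, and rearrange; the paper likewise silently drops the additive $-1$ since the criterion \eqref{hunt} is applied with ample slack. One small slip to fix: after dividing by $H^{1-1/r}$ the left side is $H^{1/r}\asymp p^{1/(2r)}$, so the intermediate exponent should be $\tfrac{1}{2r}-\tfrac{r+1}{4r^2}=\tfrac{2r-(r+1)}{4r^2}=\tfrac{r-1}{4r^2}$, not the expression $\tfrac12-\tfrac{1}{2r}-\tfrac{r+1}{4r^2}$ you wrote (which does not simplify to $(r-1)/(4r^2)$); your final value is nonetheless correct.
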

The exponent on the left side of~(\ref{hunt}) is maximised when~$r=2$. We rearrange~(\ref{hunt})
accordingly to show that we require
\begin{equation}\label{hunt2}
  \frac{p}{(\log p)^{4}} > C(2)^{16} (0.99)^{-8} \left\{ 2^{\omega(p-1)} -1\right\}^{16},
  \qquad(p> 10^{20}).
\end{equation}
With $C(2)=3.5751$ as in \cite[Table 3]{Trevino}, we see that (\ref{hunt2}) is true whenever
$\omega(p-1) \geq 17984$. Hence we need only consider $\omega(p-1) \leq 17983$. Solving for $p$ in (\ref{hunt2}) we find we need only consider
$p<10^{86650}$, which is much less than $10^{10^{10}}$. We reduce this upper bound substantially by
introducing a sieving inequality in the next section.

\section{A sieving inequality}\label{sec:sieve}

Let $e$ be an even divisor of~$p-1$. Let $\mathrm{Rad}(n)$ denote the product of the distinct
prime divisors of~$n$. If $\mathrm{Rad}(e) =\mathrm{Rad}(p-1)$, then set $s=0$ and~$\delta =1$.
Otherwise, if $\mathrm{Rad}(e) < \mathrm{Rad}(p-1)$, let $p_1, \ldots, p_s$, $s \geq 1$, be the
primes dividing $p-1$ but not $e$ and set $\delta=1-\sum_{i=1}^s p_i^{-1}$. In practice, it is
essential to choose $e$ so that~$\delta >0$.

Again let $e$ be an even divisor of~$p-1$. An integer $x$ (indivisible by $p$) will be called
\emph{$e$-free} if, for any divisor $d$ of $e$, (with $d>1$), the congruence $x \equiv y^d \pmod p$ is
insoluble. With this terminology, a primitive root is $(p-1)$-free. Given $N$ and $H$ let $N_e(H)$
be the number of integers $x$ in the range $N+1 \leq x \leq N+H$ that are indivisible by $p$ and
such that $x$ is $e$-free.
\begin{lem}\label{sieve}
  Suppose $e$ is an even divisor of $p-1$. Then, in the above notation,
  \begin{equation}\label{sieveeq1}
    N_{p-1}(H) \geq \sum_{i=1}^sN_{p_ie}(H)-(s-1)N_e(H).
  \end{equation}
  Hence
  \begin{equation}\label{sieveeq2}
    N_{p-1}(H) \geq \sum_{i=1}^s \bigl[ N_{p_ie}(H)-\theta(p_i)N_e(H) \bigr] + \delta N_e(H).
  \end{equation}
\end{lem}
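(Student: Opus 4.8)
The plan is to establish the first inequality \eqref{sieveeq1} by an inclusion–exclusion argument phrased in terms of the $e$-free counting function, and then derive \eqref{sieveeq2} by purely algebraic manipulation using the definition of $\delta$ and of $\theta$. First I would dispose of the trivial case: if $\mathrm{Rad}(e)=\mathrm{Rad}(p-1)$ then $s=0$, the sum on the right of \eqref{sieveeq1} is empty, the term $-(s-1)N_e(H)$ equals $N_e(H)$, and since every prime dividing $p-1$ already divides $e$ an integer is $e$-free precisely when it is $(p-1)$-free, so $N_{p-1}(H)=N_e(H)$ and \eqref{sieveeq1} holds with equality; likewise \eqref{sieveeq2} reduces to $N_{p-1}(H)\ge\delta N_e(H)=N_e(H)$, again an equality. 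So assume $s\ge1$.

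For the main case I would work with the indicator functions of the relevant $e$-free conditions. For an integer $x$ indivisible by $p$, being $e$-free means avoiding $d$-th power residue status for every divisor $d>1$ of $e$; equivalently, for each prime $q\mid e$, the integer $x$ is not a $q$-th power residue. Write, for a fixed $x$ counted in $N_e(H)$ (i.e.\ $x$ is $e$-free), the set $B(x)=\{\,i\in\{1,\dots,s\}: x \text{ is a } p_i\text{-th power residue}\,\}$; then $x$ is $p_i e$-free exactly when $x$ is $e$-free and $i\notin B(x)$, and $x$ is $(p-1)$-free exactly when $B(x)=\varnothing$. Summing over the $x$ counted by $N_e(H)$, I get
\begin{equation*}
  \sum_{i=1}^s N_{p_i e}(H) = \sum_{x\ e\text{-free}} \bigl(s-|B(x)|\bigr)
    = s\,N_e(H) - \sum_{x\ e\text{-free}} |B(x)|.
\end{equation*}
Since every $x$ counted by $N_{p-1}(H)$ contributes $|B(x)|=0$ and every other $e$-free $x$ contributes $|B(x)|\ge1$, we have $\sum_{x\ e\text{-free}}|B(x)| \le N_e(H) - N_{p-1}(H)$. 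Substituting this bound gives $\sum_{i=1}^s N_{p_i e}(H) \le s\,N_e(H) - \bigl(N_e(H)-N_{p-1}(H)\bigr) = (s-1)N_e(H) + N_{p-1}(H)$, which rearranges to \eqref{sieveeq1}. This Bonferroni-type step is the crux of the argument; the only subtlety is checking that the $e$-free property genuinely decomposes prime-by-prime over $\mathrm{Rad}(e)$, so that adjoining one new prime $p_i$ to $e$ imposes exactly the one additional residue condition.

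Finally, \eqref{sieveeq2} follows from \eqref{sieveeq1} without any further number theory. Assuming (as the paper does) that $\theta$ is defined so that $\theta(q)=1-1/q$ for a prime $q$, one has $\sum_{i=1}^s\theta(p_i) = s - \sum_{i=1}^s p_i^{-1} = s - (1-\delta) = (s-1)+\delta$. Hence
\begin{equation*}
  \sum_{i=1}^s \bigl[N_{p_i e}(H) - \theta(p_i)N_e(H)\bigr] + \delta N_e(H)
   = \sum_{i=1}^s N_{p_i e}(H) - \Bigl(\sum_{i=1}^s\theta(p_i)-\delta\Bigr) N_e(H)
   = \sum_{i=1}^s N_{p_i e}(H) - (s-1)N_e(H),
\end{equation*}
and the right-hand side is $\le N_{p-1}(H)$ by \eqref{sieveeq1}, giving \eqref{sieveeq2}. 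I expect the genuine work to be entirely in the inclusion–exclusion step above; the passage from \eqref{sieveeq1} to \eqref{sieveeq2} is bookkeeping, and its point is cosmetic — it packages the inequality so that each bracketed term $N_{p_i e}(H)-\theta(p_i)N_e(H)$ can later be shown nonnegative (or bounded below) using an explicit character-sum estimate like \eqref{gown}, with $\delta N_e(H)$ left as the main positive contribution.
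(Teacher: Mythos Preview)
Your approach coincides with the paper's: both examine, for each $e$-free $x$ in the range, its contribution to the right side of \eqref{sieveeq1}, observing that it equals $1$ when $x$ is $(p-1)$-free and is non-positive otherwise; your set $B(x)$ simply makes this bookkeeping explicit where the paper compresses it to a sentence. The passage from \eqref{sieveeq1} to \eqref{sieveeq2} is likewise the same rearrangement via $\sum_{i}\theta(p_i)=s-(1-\delta)=(s-1)+\delta$.

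One slip to correct: your intermediate bound on $\sum_{x}|B(x)|$ has the wrong direction. From ``every other $e$-free $x$ contributes $|B(x)|\ge 1$'' you obtain $\sum_{x\ e\text{-free}}|B(x)|\ge N_e(H)-N_{p-1}(H)$, not $\le$. With $\ge$ in place, the next step $\sum_i N_{p_ie}(H)=sN_e(H)-\sum_x|B(x)|\le sN_e(H)-\bigl(N_e(H)-N_{p-1}(H)\bigr)$ is then valid; as written, the inequality would go the wrong way.
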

\begin{proof}
For a given $e$-free integer $x$, the right side of~(\ref{sieveeq1}) contributes 1 if $x$ is
additionally $p_{i}$-free, and otherwise contributes a non-positive
quantity.
We deduce~(\ref{sieveeq2}) by rearranging~(\ref{sieveeq1}) bearing in mind the definitions for
$\theta(p_{i})$ and~$\delta$.
\end{proof}

Given the divisor $e$ of $p-1$, we extend the definition of $f(x)$ to $f_e(x)$, where
\begin{equation*}\label{fries}
  f_e(x) = \theta(e)
  \left\{ 1 + \sum_{d| e, d>1} \frac{\mu(d)}{\phi(d)} \sum_{\chi_{d}} \chi_{d}(x)\right\},
\end{equation*}
and where $\theta(e)= \frac{\phi(e)}{e}$. Hence $f_e(x) = 1$ if $x$ is $e$-free, and $f_e(x) =0$
otherwise. Thus,
\begin{equation*}
  N_e(H) = \sum_{x = N+1, x \not\equiv 0 \pmod p}^{N+H} f_e(x).
\end{equation*}
It follows from Theorem~\ref{gown} that, under the constraints of that theorem,
\begin{equation}\label{cat}
  N_e(H) \geq  \theta(e) \Bigl( H - \bigl( W(e)-1 \bigr) C(r) H^{1-1/r} p^{\frac{r+1}{4r^{2}}}
  (\log p)^{\frac{1}{2r}} \Bigr),
\end{equation}
where $W(e)= 2^{\omega(e)} $ is the number of square-free divisors of~$e$.

Similarly, for any prime divisor $l$ of $p-1$ not dividing $e$,
\begin{equation}\label{dog}
  \left|N_{le}(H) - \left(1- \frac{1}{l}\right)N_e(H)\right| \leq
  \theta(e)W(e)C(r) H^{1 - 1/r} p^{\frac{r+1}{4r^{2}}} (\log p)^{\frac{1}{2r}},
\end{equation}
where the factor $W(e)$ arises from the expression $W(le)-W(e)$.

Now apply~(\ref{cat}) and~(\ref{dog}) to~(\ref{sieveeq2}) to obtain
\begin{equation*}
  N_{p-1}(H) \geq \delta \theta(e)H- C(r) H^{1 - 1/r} p^{\frac{r+1}{4r^{2}}}
  (\log p)^{\frac{1}{2r}}W(e)\left(\delta +\sum_{i=1}^s\frac{1}{p_i}\right).
\end{equation*}
Since $\sum_{i=1}^s\frac{1}{p_i}=s-1+\delta$, this yields
\begin{equation}\label{pie}
  N_{p-1}(H) \geq \delta\theta(e) \left\{H- W(e)C(r) H^{1 - 1/r} p^{\frac{r+1}{4r^{2}}}
  (\log p)^{\frac{1}{2r}}\left(\frac{s-1}{\delta}+2 \right)\right\}.
\end{equation}

As in \S \ref{sec:ex}, we take $H = \Bigl( 1- \frac{2}{p_{0}^{1/2}} \Bigr)p^{\frac{1}{2}}$ and $r=2$
in~(\ref{pie}). This proves the following refinement of Theorem \ref{T:3}.
\begin{thm}\label{thm:sieve:final}
  Let $e$ be an even divisor of $p-1$ and $s, \delta$ as in Lemma $\ref{sieve}$ with~$\delta>0$.
  We have $g(p) < \sqrt{p} - 2$ for $p>p_{0}$ provided that
  \begin{equation}\label{horse}
    \theta (e)\left( 1- \frac{2}{p_{0}^{\frac{1}{2}}}\right) p^{\frac{1}{2}}
    \left\{ 1 - C(2) \left( 1- \frac{2}{p_{0}^{\frac{1}{2}}}\right)^{-\frac{1}{2}}
    (\log p)^{\frac{1}{4}} \left( \frac{s-1}{\delta} + 2\right)W(e) p^{-\frac{1}{16}}\right\} >0.
  \end{equation}
\end{thm}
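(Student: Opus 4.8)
The plan is to derive inequality~(\ref{horse}) directly from the sieving estimate~(\ref{pie}) together with the trivial observation that $g(p)<\sqrt p-2$ follows as soon as $N_{p-1}(H)>0$. First I would recall that, by the definition of $N_{p-1}(H)$ as the count of $(p-1)$-free integers (equivalently, primitive roots) in the window $[N+1,N+H]$, a strictly positive value of $N_{p-1}(H)$ for the specific choice $N=0$ forces a primitive root in $[1,H]$, and since $H=\bigl(1-2/p_0^{1/2}\bigr)p^{1/2}<\sqrt p-2$ whenever $p>p_0$, this yields $g(p)<\sqrt p-2$. So it suffices to exhibit conditions under which the right-hand side of~(\ref{pie}) is positive.

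Next I would substitute $r=2$ and $H=\bigl(1-2/p_0^{1/2}\bigr)p^{1/2}$ into~(\ref{pie}). With $r=2$ the exponent $\frac{r+1}{4r^2}$ becomes $\frac{3}{16}$ and $\frac{1}{2r}$ becomes $\frac14$, while $H^{1-1/r}=H^{1/2}=\bigl(1-2/p_0^{1/2}\bigr)^{1/2}p^{1/4}$. Pulling the common factor $\delta\theta(e)H=\delta\theta(e)\bigl(1-2/p_0^{1/2}\bigr)p^{1/2}$ outside the braces, the bracketed term becomes
\[
1 - W(e)C(2)\frac{H^{1/2}p^{3/16}(\log p)^{1/4}}{H}\Bigl(\tfrac{s-1}{\delta}+2\Bigr)
  = 1 - W(e)C(2)\bigl(1-\tfrac{2}{p_0^{1/2}}\bigr)^{-1/2}p^{3/16-1/2+1/4}(\log p)^{1/4}\Bigl(\tfrac{s-1}{\delta}+2\Bigr),
\]
and the arithmetic $\tfrac{3}{16}-\tfrac12+\tfrac14=-\tfrac{1}{16}$ gives exactly the factor $p^{-1/16}$ appearing in~(\ref{horse}). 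Hence $N_{p-1}(H)$ is bounded below by the left-hand side of~(\ref{horse}), and the theorem follows once that expression is positive.

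The only genuine subtlety is checking that Trevi\~no's bound~(\ref{theorem:M}), and hence the derived inequalities~(\ref{cat}) and~(\ref{dog}) leading to~(\ref{pie}), are actually applicable for the chosen $H$: this requires the side conditions $p\ge 10^{20}$ and $H\le 2p^{1/2+1/4r}=2p^{5/8}$. The latter is immediate since $H<p^{1/2}<2p^{5/8}$, and the former is part of the hypothesis regime in which we work (indeed~(\ref{hunt2}) already restricts attention to $p>10^{20}$); I would simply remark that both hold. Since~(\ref{pie}) is quoted as already established, the remaining work is purely the bookkeeping of the exponent arithmetic above, so I do not anticipate any real obstacle — the ``hard part,'' such as it is, is merely presenting the substitution cleanly and noting the applicability of Theorem~\ref{theorem:M}.
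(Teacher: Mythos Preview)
Your proposal is correct and follows exactly the paper's approach: the paper's entire proof is the one-line remark ``take $H=(1-2/p_0^{1/2})p^{1/2}$ and $r=2$ in~(\ref{pie}),'' and you have simply written out the exponent bookkeeping that this substitution entails. One small slip: after factoring out $\delta\theta(e)H$ you say ``$N_{p-1}(H)$ is bounded below by the left-hand side of~(\ref{horse}),'' but in fact it is bounded below by $\delta$ times that expression --- since $\delta>0$ by hypothesis this does not affect the positivity criterion, so the argument stands.
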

We can rearrange~(\ref{horse}) to show that our criterion becomes
\begin{equation}\label{cow}
  \frac{p}{\log^{4} p} > C(2)^{16} \left( 1- \frac{2}{p_{0}^{\frac{1}{2}}}\right)^{-8}
  \left\{ \left( \frac{s-1}{\delta} +2\right) 2^{n-s}\right\}^{16}.
\end{equation}
We consider~(\ref{cow}) for $\omega(p-1) = n \leq 17983$. By making the choice of $s$ for $n$
given in Table~\ref{tab:rat} we verify~(\ref{cow}) for
all~$n\geq 42.$

\begin{table}[ht]
  \caption{Choices of $s$ for various ranges of $\omega(p-1)=n$ such that (\ref{cow}) holds.}
  \centering
  \begin{tabular}{cc}
    \hline\hline
    Range of $\omega(p-1)=n$ & $s$ \\[0.5ex]\hline
    $[800,17983]$ & $750$ \\
    $[ 400, 799]$ &  $300$ \\
    $[ 200,  399]$ &  $180$ \\
    $[ 105,  199]$ &  $105$ \\
    $[ 72,  104]$ &   $68$ \\
    $[  55,  71]$ &   $52$ \\
    $[  47,   54]$ &   $44$ \\
    $[  43, 46]$ &   $40$ \\
              $42$ &   $38$ \\
    \hline\hline
  \end{tabular}
  \label{tab:rat}
\end{table}

We are left with those $p$ satisfying $\omega(p-1) \leq 41$.
When $\omega(p-1)=n=41$ we choose $s=37$ to minimise the right-side of~(\ref{cow}). This shows
that Grosswald's conjecture is satisfied provided that
\begin{equation}\label{qmax}
  \frac{p}{\log^{4} p} > 4.97\times 10^{62}.
\end{equation}
Solving~(\ref{qmax}) for $p$ gives $p>3.67\times 10^{71}$. It is tempting to try to remove the
$\omega(p-1)=41$ case by enumerating possible primes as in~\cite{COT1}. Since
$p-1 > p_{1} \cdots p_{41}$ we seek the number of solutions of
\begin{equation}\label{number}
  2.98\times 10^{70}\leq p \leq 3.67\times 10^{71},
  \qquad p \; \textrm{prime},
  \quad \omega(p-1) = 41.
\end{equation}
A quick computer check shows that there are 329 different primes that could appear in the
factorisation of~$p-1$. While it may be possible to enumerate all such  products satisfying (\ref{number}), this would, at best, eliminate the $n=41$ case only. We have not pursued such an enumeration.

\section{Computational results}\label{sec:comp}

The computational part of Theorem~\ref{gross:ext} was proved in the following way. The interval
$[2,10^{15}]$ was subdivided into consecutive sub-intervals of manageable size (each with $2^{20}$
integers). An efficient segmented Eratosthenes sieve (see \cite{Bays-1977-2-SSEPAP} and 
\cite[\S 1.1]{Oliveira.e.Silva-2013-3-EVGC}) was then used to identify all  primes in each
interval. For each prime $p$ that was found, a second Eratosthenes sieve, modified to yield
complete factorizations~\cite[\S 3.2.4]{Crandall-2002-2-PNCP}, was used to find the
factorization of $(p-1)/2$. Since the least primitive root modulo~$p$ cannot be of the form $a^b$
with $a>0$ and $b>1$, i.e., it cannot be a perfect power, the integers $2,3,5,6,7,10,\ldots,$ were
tried one at a time until a primitive root was found. 

With $c$ as a candidate primitive root, the
first test was to check if $c^{(p-1)/2}\equiv -1 \pmod p$. This was efficiently done
using the quadratic reciprocity law data from known tables. If this test failed the next $c$
candidate was tried. Otherwise, for each odd prime factor $q$ of $(p-1)/2$ it was checked whether
$c^{(p-1)/q}\not\equiv 1 \pmod p$. The next $c$ candidate was tried if one of these tests failed.
These tests were efficiently done by performing all modular arithmetic using the Montgomery
method~\cite{Montgomery-1985-1-MMWTD}. Since the ``probability'' of failure of an individual test
is $1/q$, the odd factors $q$ were sorted in increasing order before performing these tests.
Note that $g(p)$ is equal to the first $c$ that passes all tests.

Instead of checking~(\ref{gross}) directly for each prime up to $2.5\times 10^{15}$, the
record-holder values of $g(p)$, i.e., values of $g(p)$ such that $g(p')<g(p)$ for all $p'<p$, were
computed, as these are of independent interest~\cite{Bach-1997-1-CSPPR} and can be used to
check~(\ref{gross}) indirectly. The computation required a total time of about $3$ one-core
years, and took about one month to finish on nine computers (each with $4$ cores) of one computer
lab of the Electronics, Telecommunications, and Informatics Department of the University of
Aveiro. Table~\ref{tab:precords} presents all $g(p)$ record-holders that were found up
to~$2.5\times 10^{15}$. It extends and corrects one entry of Table~2
of~\cite{Paszkiewicz-2002-2-DPNLPR}, which is a summary of computations up to~$4\times 10^{10}$.

\begin{table}[ht]
  \caption{$g(p)$ record-holders with $p<2.5\times 10^{15}$.}
  \centering
  \begin{tabular}{rr@{$\quad\qquad$}rr@{$\quad\qquad$}rr}
    \hline\hline
    $g(p)$ &   $p$ & $g(p)$ &        $p$ & $g(p)$ &              $p$ \\[0.5ex]\hline
         2 &     3 &     69 &     110881 &    179 &       6064561441 \\
         3 &     7 &     73 &     760321 &    194 &       7111268641 \\
         5 &    23 &     94 &    5109721 &    197 &       9470788801 \\
         6 &    41 &     97 &   17551561 &    227 &      28725635761 \\
         7 &    71 &    101 &   29418841 &    229 &     108709927561 \\
        19 &   191 &    107 &   33358081 &    263 &     386681163961 \\
        21 &   409 &    111 &   45024841 &    281 &    1990614824641 \\
        23 &  2161 &    113 &   90441961 &    293 &   44384069747161 \\
        31 &  5881 &    127 &  184254841 &    335 &   89637484042681 \\
        37 & 36721 &    137 &  324013369 &    347 &  358973066123281 \\
        38 & 55441 &    151 &  831143041 &    359 & 2069304073407481 \\
        44 & 71761 &    164 & 1685283601                             \\
    \hline\hline
  \end{tabular}
  \label{tab:precords}
\end{table}

The largest $g(p)$ record-holder in Table~\ref{tab:precords} that does not satisfy~(\ref{gross})
is~$21$, corresponding to $p=409$. Thus, up to $2.5\times 10^{15}$, the largest $p$ for
which~(\ref{gross}) is possibly false satisfies $\sqrt{p}-2<21$, i.e., $p<529$. It turns out, as
already verified by Grosswald, that the last failure of~(\ref{gross}) occurs for $p=409$.

An analysis similar to the one described above was also performed for least prime primitive
roots~$\hat{g}(p)$, and for least negative primitive roots~$h(p)$. The least negative primitive
root modulo~$p$ is equal to the negative integer, least in absolute value, that is a primitive
root modulo~$p$. It cannot be of the form $-a^b$ with $a>0$ and $b>2$, and is equal to $-g(p)$ if
$p\equiv 1\pmod 4$. It was found that $\hat{g}(p)<\sqrt{p}-2$ for $2791<p<2.5\times 10^{15}$, and
that $-h(p)<\sqrt{p}-2$ for $409<p<10^{15}$. We remark that little is known about either $\hat{g}(p)$ or $h(p)$ --- the reader may consult \cite{Martin} for more details.

\section{Conclusion}\label{sec:conclusion}

It appears difficult to resolve completely Grosswald's conjecture. Table 3 in \cite{Trevino} indicates that one may hope to reduce the size of $C(2)$ further by taking a larger value of $p_{0}$. However, this appears at present not to give an improvement for our purposes.

An alternative approach is to
use a smoothed version of Burgess' bounds, in the same way that a smoothed P\'{o}lya--Vinogradov
inequality was used in~\cite{LPS2010}. 

\section{Acknowledgements}
Some of this work was completed when the third author visited the first author. This visit was supported by the Royal Society of Edinburgh and the Edinburgh Mathematical Society: the authors are grateful for this support and for the hospitality of the School of Mathematics and Statistics at the University of Glasgow.

\end{document}